\newtheorem{theorem}{Theorem}
\newtheorem{prop}{Proposition}
\newtheorem{lemma}{Lemma}
\newtheorem{remark}{Remark}
\newtheorem*{definition}{Definition}
\newtheorem{cor}{Corollary}
\numberwithin{equation}{section}
\def\XXint#1#2#3{{\setbox0=\hbox{$#1{#2#3}{\int}$}
  \vcenter{\hbox{$#2#3$}}\kern-.5\wd0}}
\author{Gang Liu}
\address{Department of Mathematics\\University of Minnesota\\Minneapolis, MN 55455}
\email{liuxx895@math.umn.edu}
\title[Bakry-Emery tensor]{\bf Stable weighted minimal surfaces in manifolds with nonnegative Bakry-Emery Ricci tensor}
\date{}
\begin{document}
\begin{abstract}In this paper, we study stable weighted minimal hypersurfaces in manifolds with nonnegative Bakry-Emery Ricci curvature.
We will give some geometric and topological applications. In particular, we give some partial classification of complete 3-manifolds with nonnegative Bakry-Emery Ricci curvature assuming
that $f$ is bounded.
\end{abstract}\maketitle

\section{\bf{Introduction}}

A smooth metric measure space is a triple $(M, g, e^{-f}dvol)$, where $M$ is a smooth manifold; $g$ is the
Riemannian metric
on $M$; $f$ is a smooth function and $dvol$ is the volume form induced by $g$. This object has been studied
 extensively
in geometric analysis in recent years, e.g, \cite{[P1]}\cite{[Lo]}\cite{[WW]}\cite{[MW]}\cite{[MW1]}\cite{[MW2]}.
Perelman \cite{[P1]} introduces
a functional which involves an integral of the scalar curvature with respect to a weighted measure. The Ricci flow is thus
a gradient
flow of such a functional. Metric measure spaces also arise in smooth collapsed Gromov-Hausdorff limits. In the
physics
literature, $f$ is refered to as the dilation field. On the smooth metric measure space, there is an important
curvature quantity
 called the Bakry-Emery Ricci curvature, which is defined in \cite{[BE]} by
$$Ric_f = Ric + \nabla^2 f.$$
One observes that $Ric_f = \lambda g$ for some constant $\lambda$ is exactly the gradient Ricci soliton equation,
which
plays an essential role in the analysis of the singularities of the Ricci flow.

A lower bound for Bakry-Emery curvature is a natural assumption to make and it has significant geometric
consequences. More generally, $Ric_f$ has a natural extension to metric measure spaces, see \cite{[LV]}\cite{[S1]}\cite{[S2]}. 
Recently, in \cite{[WW]}, G. F. Wei and W. Wylie proved the weighted volume comparison theorems; O. Munteanu and J. Wang
established
the gradient estimate for positive weighted harmonic functions. It should be noted that a while back, Lichnerowicz
 \cite{[Lc]}
has generalized the classical Cheeger-Gromoll splitting theorem \cite{[CG]} to the metric measure spaces with $Ric_f \geq 0$ and
 $f$ is bounded(See \cite{[FLZ]} for more generalizations).

 In Riemannian geometry, minimal surfaces arise naturally in the variation of the area functional.
A minimal surface is called stable if the second variation of the area is nonnegative for any compactly
supported variations.
Minimal surfaces have their own beauties, e.g, Bernstein's theorem.
Moreover, they have important applications to the geometry and topology of manifolds.
For example, more than 60 years ago, the Synge theorem and the Bonnet-Meyers theorem were proved by the variation of geodesics(one
dimensional minimal surface).
More recently, by using minimal surfaces, Schoen and Yau proved the famous positive mass conjecture \cite{[SY2]}\cite{[SY3]}
. Meeks and Yau \cite{[MY1]}\cite{[MY2]} proved the loop theorem, sphere theorem
and Dehn lemma together with the equivariant forms. In \cite{[SY1]}, Schoen and Yau proved that a complete
noncompact 3-manifold with positive Ricci curvature is diffeomorphic to $\mathbb{R}^3$.
Anderson \cite{[An1]} studied the restriction of the first betti number for manifolds with nonnegative Ricci curvature;
the author \cite{[L]} used the minimal surface theory to classify complete three dimensional
manifolds with nonnegative Ricci curvature.

In the study of smooth metric measure spaces, it is natural to add a weight $e^{-f}$ on the area functional
of the surface.
The critical points of the weighted area functional are called weighted minimal surfaces. A weighted minimal
surface is called stable if the second
variation of the weighted area is nonnegative.

Very recently, X. Cheng, T. Mejia and D. T. Zhou \cite{[CZ]} studied the stability condition and compactness of
$f$-minimal surfaces. They \cite{[CZ1]} also gave eigenvalue estimates for certain closed $f$-minimal surfaces.

In this paper, we will investigate some geometric and topological results for smooth metric measure spaces via
analyzing stable weighted minimal surfaces. We shall assume that the Bakry-Emery Ricci curvature is nonnegative.

Below is the organization of this paper.
In section 2, we will derive the second variation formula for the weighted area(see also \cite{[CZ]} and \cite{[Bay]} for the derivation).
We give an application to compact stable $f$-minimal surfaces in section 3. This generalizes some previous works of Heintze and Karcher \cite{[HK]}.
An example is given in section 4 to show that a result of Schoen and Fischer-Colbrie \cite{[FS]} cannot
be extended to the case when Bakry-Emery Ricci curvature is nonnegative.
In section 5 we give an application of the stability inequality to noncompact case.
In section 6, we study the topology of complete 3-manifolds with nonnegative Bakry-Emery Ricci curvature.

\section*{Acknowledgements}

The author would like to express his deep gratitude to his advisor, Professor Jiaping Wang, for his interest in this problem and useful suggestions.
He also thanks Professor Frank Morgan for pointing out more references.

\section{\bf{Second variation formula}}
\begin{definition}
Let $(M^m, g, e^{-f}dv)$ be a complete smooth metric measure space and $\Sigma$ be a complete submanifold in $M$. We say $\Sigma$ is  $f$-minimal in $M$, if the first variation of the $e^{-f}$ weighted area functional vanishes at $\Sigma$. $\Sigma$ is called stable $f$-minimal if the second variation of the $e^{-f}$ weighted area functional is nonnegative along any compactly supported variational normal vector field.
\end{definition}

\begin{prop}
Let $(M^m, g, e^{-f}dv)$ be a complete smooth metric measure space and $\Sigma^n$ be a complete $f$-minimal submanifold in $M$.
 Let $e_i(0\leq i\leq n)$ be an orthonormal frame in an open set of $\Sigma$. Define $\nabla^T$ and $\nabla^\perp$ to be the connections
projected to the tangential and normal spaces on $\Sigma$.
Then $$H = \nabla^{\perp}f$$ where $H = -\sum\limits_{i}\nabla_{e_i}^\perp e_i$ is the mean curvature vector. If $\Sigma_t$($-\epsilon < t < \epsilon$) is a smooth family of the submanifolds such that $\Sigma_0 = \Sigma$ and the variational normal vector field $\nu$ is compactly supported on $\Sigma_t$, then at $t=0$,
$$\frac{d^2\int_{\Sigma_t}e^{-f}}{dt^2} = \int_\Sigma e^{-f}(-\sum\limits_{i=1}^nR_{i\nu\nu i}-\frac{1}{2}\Delta_\Sigma(|\nu|^2)+|\nabla_\Sigma \nu|^2-2|A^\nu|^2
-f_{\nu\nu}+\frac{1}{2}\langle\nabla^Tf, \nabla^T(|\nu|^2)\rangle)$$ where $A^\nu_{ij} = -\langle \nabla_{e_i}e_j, \nu\rangle$.
\end{prop}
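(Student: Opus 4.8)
The plan is to differentiate the weighted area $\phi(t)=\int_{\Sigma_t}e^{-f}\,d\mu_t$ twice, working with a normal variation, so that the variation field $X$ restricts to the prescribed normal field $\nu$ on each $\Sigma_t$ (this is the setting of the statement). The first variation is routine: since $\frac{d}{dt}\,d\mu_t=(\operatorname{div}_{\Sigma_t}X)\,d\mu_t$ and $\frac{d}{dt}e^{-f}=-e^{-f}X(f)$, one gets $\phi'(t)=\int_{\Sigma_t}e^{-f}\,w\,d\mu_t$ with $w:=\operatorname{div}_{\Sigma_t}X-X(f)$. Computing $\operatorname{div}_\Sigma\nu=\langle H,\nu\rangle$ and $\nu(f)=\langle\nabla^\perp f,\nu\rangle$ shows $w|_{t=0}=\langle H-\nabla^\perp f,\nu\rangle$; requiring this to vanish for all compactly supported normal $\nu$ forces $H=\nabla^\perp f$, the first assertion.

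For the second variation I would observe that $\frac{d}{dt}(e^{-f}\,d\mu_t)=e^{-f}w\,d\mu_t$, so $\phi''(t)=\int_{\Sigma_t}e^{-f}\big(\tfrac{dw}{dt}+w^2\big)\,d\mu_t$. Because $\Sigma$ is $f$-minimal, $w|_{t=0}=0$ pointwise, the $w^2$ term drops, and $\phi''(0)=\int_\Sigma e^{-f}\,\tfrac{dw}{dt}\big|_0\,d\mu$. It then remains to evaluate $\tfrac{dw}{dt}|_0=\tfrac{d}{dt}\operatorname{div}_{\Sigma_t}X|_0-\tfrac{d}{dt}X(f)|_0$. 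The second piece is immediate: $\tfrac{d}{dt}X(f)|_0=\operatorname{Hess}f(\nu,\nu)+\langle\nabla f,a\rangle=f_{\nu\nu}+\langle\nabla f,a\rangle$, where $a:=\nabla_XX|_0$ is the acceleration.

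The main computation is $\tfrac{d}{dt}\operatorname{div}_{\Sigma_t}X|_0$, equivalently the second variation $\tfrac12(\log\det g_t)''(0)$ of the induced area element, where $g_{ij}(t)=\langle\partial_i,\partial_j\rangle$. Here $\partial_tg_{ij}|_0=2A^\nu_{ij}$, and the second derivative is obtained by commuting covariant derivatives, $\nabla_{\partial_t}\nabla_{\partial_i}X=\nabla_{\partial_i}\nabla_XX+R(X,\partial_i)X$, which is exactly where the curvature enters. Feeding this into $\tfrac12\big(g^{ij}\partial_t^2g_{ij}-|\partial_tg|^2\big)$ gives $\tfrac{d}{dt}\operatorname{div}_{\Sigma_t}X|_0=-\sum_iR_{i\nu\nu i}+|\nabla_\Sigma\nu|^2-2|A^\nu|^2+\operatorname{div}_\Sigma a$. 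I expect this curvature commutation, together with the bookkeeping of tangential versus normal parts and the identity $|\nabla_\Sigma\nu|^2=|\nabla^\perp\nu|^2+|A^\nu|^2$, to be the main obstacle, since signs and projections must be tracked carefully.

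Finally I would assemble the acceleration terms. A normal variation forces a specific tangential acceleration: from $\langle\nabla_XX,\partial_i\rangle=-\langle X,\nabla_{\partial_i}X\rangle=-\tfrac12\partial_i|\nu|^2$ one reads off $a^T=-\tfrac12\nabla^T|\nu|^2$. Writing the pooled acceleration contribution as $\operatorname{div}_\Sigma a-\langle\nabla f,a\rangle=\big(\operatorname{div}_\Sigma a^T-\langle\nabla^Tf,a^T\rangle\big)+\langle H-\nabla^\perp f,a^\perp\rangle$, the normal bracket vanishes by $f$-minimality, while the tangential bracket becomes $-\tfrac12\Delta_\Sigma|\nu|^2+\tfrac12\langle\nabla^Tf,\nabla^T|\nu|^2\rangle$. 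Collecting these with $-f_{\nu\nu}$ yields precisely the stated integrand. As a consistency check, the last two terms equal $-\tfrac12\Delta_f|\nu|^2$ for the drift Laplacian $\Delta_f=\Delta_\Sigma-\langle\nabla^Tf,\nabla^T\cdot\rangle$, hence integrate to zero against $e^{-f}\,d\mu$ over compactly supported $\nu$.
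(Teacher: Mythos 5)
Your proposal is correct and follows essentially the same route as the paper: both reduce $\phi''(0)$ to $\int_\Sigma e^{-f}\,\tfrac{dw}{dt}\big|_0$ using the vanishing of the first-variation integrand $w=\langle H-\nabla^\perp f,\nu\rangle$ at $t=0$, extract the curvature term by commuting $\nabla_\nu$ with $\nabla_{e_i}$, obtain $-\tfrac12\Delta_\Sigma|\nu|^2+\tfrac12\langle\nabla^Tf,\nabla^T|\nu|^2\rangle$ from the tangential acceleration $a^T=-\tfrac12\nabla^T|\nu|^2$, and kill the normal acceleration term via $H=\nabla^\perp f$. Your packaging of $\tfrac{d}{dt}\operatorname{div}_{\Sigma_t}\nu$ through $\tfrac{d^2}{dt^2}\log\sqrt{\det g_t}$ is only a reorganization of the paper's direct frame computation of $\tfrac{d}{dt}\langle H,\nu\rangle$ in equations (2.1)--(2.2), and your intermediate identities all check out.
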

\begin{proof}
For any point $p \in \Sigma_0$, consider a local frame $e_i(1\leq i \leq n)$ near $p$ such that they are tangential to $\Sigma_t$ and $[e_i, \nu] = 0$ for all small $t$. We can also assume that at $p$, $e_i$ is an orthonormal frame and $\nabla^{T}_{e_i}e_j = 0$. Let $g_{ij} = \langle e_i, e_j\rangle$ and $g^{ij}$ be the inverse matrix of $g_{ij}$. We have
$$\frac{d\int_{\Sigma_t}e^{-f}}{dt} = \int_{\Sigma_t}e^{-f}\langle H - \nabla^{\perp}f, \nu\rangle$$
where $$H = -(\nabla_{e_i}e_j)^{\perp}g^{ij}.$$ Thus if $\Sigma_0$ is $e^{-f}$ minimal, $$H = \nabla^{\perp}f.$$

At $p$, we have
\begin{equation}
\begin{aligned}
\frac{d\langle H, \nu\rangle}{dt}& = -(\langle\nabla_{\nu}\nabla_{e_i}e_j, \nu\rangle g^{ij} + \langle\nabla_{e_i}e_j, \nabla_\nu\nu\rangle g^{ij} + \langle\nabla_{e_i}e_j, \nu\rangle\nu(g^{ij}))\\&= -(\sum\limits_{i=1}^nR_{\nu ii\nu}
+ \langle\nabla_{e_i}\nabla_\nu e_i, \nu\rangle -\langle H, \nabla_\nu\nu\rangle - \sum\limits_{i, j = 1}^n
\langle\nabla_{e_i}e_j, \nu\rangle(\langle \nabla_{\nu}e_i, e_j\rangle + \langle \nabla_{\nu}e_j, e_i\rangle))\\&=
-(\sum\limits_{i=1}^nR_{\nu ii\nu}+\frac{1}{2}\Delta_\Sigma(|\nu|^2)-\sum_{i=1}^{n}|\nabla_{e_i}\nu|^2+2\sum\limits_{i, j = 1}^n|\langle\nabla_{e_i}e_j, \nu\rangle|^2 -\langle H, \nabla_\nu\nu\rangle).
\end{aligned}
\end{equation}
\begin{equation}
\begin{aligned}
\frac{d\langle \nabla^\perp f, \nu\rangle}{dt}& =\nu\nu (f)\\&=f_{\nu\nu} + \langle\nabla^T f, \nabla_\nu\nu\rangle + \langle\nabla^\perp f, \nabla_\nu\nu\rangle\\&=f_{\nu\nu}+\sum\limits_{i=1}^ne_i(f)\langle e_i, \nabla_\nu\nu\rangle +
\langle\nabla^\perp f, \nabla_\nu\nu\rangle\\&=f_{\nu\nu} - \frac{1}{2}\langle\nabla^T f, \nabla^T(|\nu|^2)\rangle+
\langle\nabla^\perp f, \nabla_\nu\nu\rangle.
\end{aligned}
\end{equation}

Since $\Sigma_0$ is $f$ minimal, by the two equalities above, we have
\begin{equation}
\begin{aligned}
\frac{d^2\int_{\Sigma_t}e^{-f}}{dt^2}& = \frac{d\int_{\Sigma_t}e^{-f}\langle H - \nabla^{\perp}f, \nu\rangle}{dt}
\\&=\int_\Sigma e^{-f}(-\sum\limits_{i=1}^nR_{i\nu\nu i}-\frac{1}{2}\Delta_\Sigma(|\nu|^2)+|\nabla_\Sigma\nu|^2-2|A^\nu|^2
-f_{\nu\nu}+\frac{1}{2}\langle\nabla^Tf, \nabla^T(|\nu|^2)\rangle).
\end{aligned}
\end{equation}

\end{proof}

\begin{cor}
Let $(M^m, g, e^{-f}dv)$ be a complete oriented Riemannian manifold and $\Sigma_t$ be a smooth family of oriented
hypersurfaces in $M$. Let $N$ be the unit normal vector field on $\Sigma_t$. Suppose the variational vector field
for $\Sigma_t$ is given by $\lambda N$ where $\lambda$ is smooth function with compact support on $\Sigma_t$.
If $\Sigma_0$ is $e^{-f}$ minimal, then the mean curvature of $\Sigma_0$ satisfies $$H = f_n.$$ where $f_n$ is  the normal derivative of $f$.
Moreover,
 $$\frac{d^2\int_{\Sigma_t}e^{-f}}{dt^2}|_{t=0} =
\int_{\Sigma_0}(|\nabla \lambda|^2 - \lambda^2(Ric_f(n,n) + |A|^2))e^{-f}$$ where $Ric_f = Ric + \nabla^2 f$, $A$ is the second fundamental form.
Therefore, the stability inequality is $$\int_{\Sigma_0}(|\nabla \lambda|^2 - \lambda^2(Ric_f(n,n) + |A|^2))e^{-f}\geq 0$$
for any compactly supported function $\lambda$ on $\Sigma_0$.
\end{cor}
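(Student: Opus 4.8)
The plan is to specialize the second variation formula of the Proposition to the hypersurface case by setting $\nu = \lambda N$, where $N$ is the chosen unit normal and $\lambda$ is a compactly supported scalar function on $\Sigma_0$. First, since the normal bundle is now a line spanned by $N$, the projection $\nabla^\perp f = (Nf)N = f_n N$, while the mean curvature vector is $H = H_0 N$ for a scalar $H_0$; equating the two sides of $H = \nabla^\perp f$ gives the scalar condition $H = f_n$.

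Next I would substitute $\nu = \lambda N$ term by term. Because $|N| = 1$, one has $|\nu|^2 = \lambda^2$, and the curvature term becomes $-\sum_i R_{i\nu\nu i} = -\lambda^2 \sum_i R_{iNNi} = -\lambda^2 Ric(N,N)$; here the sum over the tangential frame already produces the full Ricci curvature, since the missing $N$-direction term $R(N,N,N,N)$ vanishes by antisymmetry. The shape tensor scales as $A^\nu_{ij} = \lambda A_{ij}$, so $-2|A^\nu|^2 = -2\lambda^2|A|^2$ and $f_{\nu\nu} = \lambda^2 f_{NN}$. For the first-derivative term I would expand $\nabla_{e_i}\nu = (e_i\lambda)N + \lambda \nabla_{e_i}N$ and use that $\nabla_{e_i}N$ is tangential with $\langle \nabla_{e_i}N, e_j\rangle = A_{ij}$; this yields $|\nabla_\Sigma \nu|^2 = |\nabla\lambda|^2 + \lambda^2|A|^2$, which combines with the $-2|A^\nu|^2$ term to leave a net $-\lambda^2|A|^2$.

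The key step is the treatment of the two remaining terms $-\tfrac12\Delta_\Sigma(|\nu|^2) + \tfrac12\langle \nabla^T f, \nabla^T(|\nu|^2)\rangle = -\tfrac12\big(\Delta_\Sigma - \langle \nabla_\Sigma f, \nabla_\Sigma \,\cdot\,\rangle\big)(\lambda^2)$, that is, minus one half of the weighted (drift) Laplacian applied to $\lambda^2$. Since $e^{-f}\big(\Delta_\Sigma u - \langle \nabla_\Sigma f, \nabla_\Sigma u\rangle\big) = \operatorname{div}_\Sigma(e^{-f}\nabla_\Sigma u)$, integrating against $e^{-f}$ over $\Sigma$ and applying the divergence theorem (legitimate because $\lambda$, hence $\lambda^2$, has compact support) makes this contribution vanish. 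Collecting the surviving terms and using $Ric_f(N,N) = Ric(N,N) + f_{NN}$ gives exactly the claimed formula, and the stability inequality follows at once from the definition of a stable $f$-minimal hypersurface as nonnegativity of the second variation. I expect the only delicate point to be the bookkeeping of signs in the $|\nabla_\Sigma \nu|^2$ versus $|A^\nu|^2$ cancellation, together with recognizing the weighted-Laplacian structure that forces the boundary-type terms to integrate to zero.
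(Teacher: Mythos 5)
Your proposal is correct and follows essentially the same route as the paper: specialize the Proposition to $\nu=\lambda N$, compute each term (in particular $|\nabla_\Sigma\nu|^2=|\nabla\lambda|^2+\lambda^2|A|^2$ and $-\sum_i R_{i\nu\nu i}=-\lambda^2 Ric(N,N)$), and dispose of the remaining terms by integration by parts against $e^{-f}$. The only cosmetic difference is that you observe the combination $-\tfrac12\Delta_\Sigma(\lambda^2)+\tfrac12\langle\nabla^T f,\nabla^T(\lambda^2)\rangle$ is $-\tfrac12$ times a weighted Laplacian and hence integrates to zero outright, whereas the paper first rewrites it as $-\lambda\Delta\lambda+\lambda\langle\nabla f,\nabla\lambda\rangle$ and then integrates by parts to produce $|\nabla\lambda|^2$; these are the same computation.
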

\begin{proof}
Since $\Sigma_0$ is weighted minimal,  according to Proposition 1, $$H = \langle\nabla^\perp f, N\rangle = f_n.$$
Let $\nu = \lambda N$. For an orthonormal frame $e_i$ at a point on $\Sigma_0$,
\begin{equation}
\begin{aligned}
|\nabla_\Sigma\nu|^2 &= |\langle\nabla_{e_i}(\lambda N), \nabla_{e_i}(\lambda N)\rangle|^2
\\&=|\nabla\lambda|^2+\sum\limits_{i, j}|\langle\nabla_{e_i}(\lambda N), e_j\rangle|^2\\&=
|\nabla\lambda|^2 + \lambda^2|A|^2.
\end{aligned}
\end{equation}
Therefore
\begin{equation}
\begin{aligned}
\frac{d^2\int_{\Sigma_t}e^{-f}}{dt^2}& =\int_{\Sigma_0} e^{-f}(-\sum\limits_{i=1}^nR_{i\nu\nu i}-\frac{1}{2}\Delta_\Sigma(|\nu|^2)+|\nabla_\Sigma\nu|^2-2|A^\nu|^2
-f_{\nu\nu}+\frac{1}{2}\langle\nabla^Tf, \nabla^T(|\nu|^2)\rangle)\\&=\int_{\Sigma_0} e^{-f}
(-\lambda^2Ric_f(n, n)-\lambda\Delta\lambda-\lambda^2|A|^2+\langle\nabla f, \nabla\lambda\rangle\lambda)
\\&=\int_{\Sigma_0}(|\nabla \lambda|^2 - \lambda^2(Ric_f(n,n) + |A|^2))e^{-f}.
\end{aligned}
\end{equation}
In the last step, we have used the integration by parts.
\end{proof}

\section{\bf{An application to the compact case}}
In \cite{[S]}, Simons observed that there are no closed, stable minimal 2-sided hypersurfaces in a manifold with positive Ricci
curvature. Later Heintze and Karcher \cite{[HK]} proved that the exponential map of the normal bundle of a hypersurface
$\Sigma\in M$ is area decreasing, if $\Sigma$ is stable, minimal and $M$ has nonnegative Ricci curvature.
Anderson extended this result, he also proved that a version of the Cheeger-Gromoll splitting theorem in the compact
case, see \cite{[An2]}.  More recently, F. Morgan \cite{[M]} obtained the upper bound of weighted volume of one side of a hypersurface which generalizes some works in \cite{[HK]}. See also chapeter $18$ in \cite{[M1]} for more discussion.

In this section, we shall prove the following:

\begin{theorem}
{Let $(M^m, g, e^{-f}dv)$ be an oriented complete Riemannian manifold and $\Sigma$ be
a closed oriented stable $f$-minimal hypersurface in $M$.
If $Ric_f \geq 0$, then $\Sigma$ is totally geodesic and $Ric_f(n, n) = 0$. If $\Sigma$ is
weighted $f$-area-minimizing in its homology class, then $M^m$ is isometric to a quotient of
$\Sigma \times \mathbb{R}$. In this case, if $m = 3$, then topologically $\Sigma$ is either a sphere or a torus.
In the torus case, $M^3$ is flat.}
\end{theorem}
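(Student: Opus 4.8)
The plan is to use the stability inequality of Corollary 1 as the engine, handling the four assertions in turn. Throughout I write $H_f = H - f_n$ for the weighted mean curvature, so that Corollary 1 says an $f$-minimal hypersurface has $H_f = 0$.

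\emph{Totally geodesic and $Ric_f(n,n)=0$.} Since $\Sigma$ is closed, the constant $\lambda\equiv 1$ is admissible in
\[ \int_{\Sigma}\big(|\nabla\lambda|^2 - \lambda^2(Ric_f(n,n)+|A|^2)\big)e^{-f}\ge 0. \]
First I would substitute $\lambda\equiv 1$, getting $\int_\Sigma (Ric_f(n,n)+|A|^2)e^{-f}\le 0$. As $Ric_f\ge 0$ makes both $Ric_f(n,n)$ and $|A|^2$ nonnegative, the integrand vanishes identically; hence $A\equiv 0$ (totally geodesic) and $Ric_f(n,n)\equiv 0$ on $\Sigma$.

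\emph{The splitting.} Now assume $\Sigma$ is weighted area-minimizing in its homology class. I would flow $\Sigma$ by the unit-speed normal geodesics, writing $\Sigma_t=\{\exp_p(tn):p\in\Sigma\}$ and $\mathcal A(t)=\int_{\Sigma_t}e^{-f}$. Each $\Sigma_t$ is homologous to $\Sigma$, so $\mathcal A(t)\ge \mathcal A(0)$. The first variation gives $\mathcal A'(t)=\int_{\Sigma_t}H_f\,e^{-f}$, while the Riccati equation $S'=-S^2-R(\cdot,n)n$ together with $\tfrac{d}{dt}f_n=f_{nn}$ yields the pointwise evolution $\tfrac{d}{dt}H_f=-|A|^2-Ric_f(n,n)\le 0$. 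Since $H_f=0$ on $\Sigma_0$ by the previous step, integrating along each normal geodesic gives $H_f\le 0$ on $\Sigma_t$ for $t>0$ and $H_f\ge 0$ for $t<0$, so $\mathcal A$ is non-increasing for $t>0$ and non-decreasing for $t<0$. Combined with $\mathcal A(t)\ge \mathcal A(0)$ this forces $\mathcal A\equiv \mathcal A(0)$ near $0$, hence $\mathcal A'\equiv 0$ and, by the sign of $H_f$, $H_f\equiv 0$ on every leaf. Feeding this back into the evolution equation gives $|A|^2+Ric_f(n,n)\equiv 0$, so every $\Sigma_t$ is totally geodesic. Then in Fermi coordinates the induced metrics $g_t$ satisfy $\partial_t g_t=2A_t=0$, so the metric is $dt^2+g_0$ and a tube around $\Sigma$ splits isometrically as $\Sigma\times(-\epsilon,\epsilon)$. \emph{The globalization is the main obstacle:} because $A\equiv 0$ kills all focal points, the map $\Phi(p,t)=\exp_p(tn)$ extends to all of $\Sigma\times\mathbb R$ by completeness as a local isometry onto the product metric, and a local isometry from a complete manifold onto a connected one is a Riemannian covering, giving $M\cong(\Sigma\times\mathbb R)/\Gamma$.

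\emph{Topology when $m=3$.} Here $\Sigma$ is a closed oriented surface. Since $\Sigma$ is totally geodesic the Gauss equation gives $K_\Sigma=K_M(e_1,e_2)$ for a tangent frame, while the product structure gives $Ric_M(n,n)=0$, so $K_\Sigma=\tfrac12\big(Ric_M(e_1,e_1)+Ric_M(e_2,e_2)\big)$. Writing $Ric_M=Ric_f-\nabla^2 f$ and noting that, because $\Sigma$ is totally geodesic, $\sum_i\nabla^2 f(e_i,e_i)=\Delta_\Sigma(f|_\Sigma)$ integrates to zero over the closed $\Sigma$, I obtain $\int_\Sigma K_\Sigma=\tfrac12\int_\Sigma\big(Ric_f(e_1,e_1)+Ric_f(e_2,e_2)\big)\ge 0$. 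Gauss–Bonnet then gives $\chi(\Sigma)\ge 0$, so the orientable surface $\Sigma$ is a sphere or a torus.

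\emph{The torus case.} If $\Sigma$ is a torus then $\chi(\Sigma)=0$, so the last inequality is an equality and $Ric_f(e_1,e_1)+Ric_f(e_2,e_2)\equiv 0$ on each leaf; with $Ric_f(n,n)=0$ and $Ric_f\ge 0$ this makes the nonnegative tensor $Ric_f$ have vanishing trace, hence $Ric_f\equiv 0$ on $M$. Moreover $f$-minimality with $A\equiv 0$ forces $f_n=0$ on every leaf, so $f$ is $t$-independent and descends to $\Sigma$, and $Ric_f=0$ becomes $Ric_\Sigma+\nabla^2_\Sigma f=0$: thus $\Sigma$ is a compact steady gradient Ricci soliton. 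I would finish with the standard soliton rigidity: from $Ric_\Sigma=-\nabla^2 f$ one derives $R_\Sigma+|\nabla f|^2\equiv C$ and $R_\Sigma=-\Delta f$; evaluating $-\Delta f+|\nabla f|^2=C$ at a maximum and a minimum of $f$ forces $C=0$, after which $\Delta f=|\nabla f|^2$ integrates to $\nabla f\equiv 0$, so $f$ is constant and $K_\Sigma=\tfrac12 R_\Sigma\equiv 0$. Hence $\Sigma$ is a flat torus and $M\cong(\Sigma\times\mathbb R)/\Gamma$ is flat.
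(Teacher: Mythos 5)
Your proposal is correct and follows essentially the same route as the paper: $\lambda\equiv 1$ in the stability inequality, the normal flow with $\tfrac{d}{dt}(H-f_n)=-|A|^2-Ric_f(n,n)$ combined with the area-minimizing hypothesis to force the splitting, the Gauss equation plus Gauss--Bonnet for the topology when $m=3$, and the identification of the torus leaf as a compact steady gradient soliton. You merely supply more detail than the paper at two points (the globalization of the local product via the covering-map argument, and the explicit rigidity computation showing a compact two-dimensional steady soliton is flat), but the underlying argument is the same.
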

\begin{proof}
The first conclusion follows if we take $\lambda = 1$ in corollary $1$.  Let $N$ be the unit normal
vector field on $\Sigma$. For $x$ close to $\Sigma$ in $M$, consider the oriended distance function
 $d(x)=Sign(x)dist(x, \Sigma)$, where $Sign(x)$ is $1$ if $x$ is on one side of $\Sigma$; $Sign(x) = -1$ if $x$
 is on the other side of $\Sigma$. Then $d(x)$ is smooth near $\Sigma$ and let $\Sigma_t$ be the level set
of $d(x)$. Then for $t$ small, $\Sigma_t$ is a smooth family of hypersurfaces on $M$ and we have
$$\frac{d(H-f_n)}{dt} = -Ric(n, n)-|A|^2-f_{nn} = -Ric_f(n, n)-|A|^2 \leq 0.$$  Note that $\Sigma_0$
is totally geodesic and $f_n = H = 0$ at $t=0$. Therefore $$H-f_n \leq 0$$ for
all $t$ and $$\frac{d\int_{\Sigma_t}e^{-f}}{dt} = \int_{\Sigma_t}(H-f_n)e^{-f} \leq 0.$$ Since $\Sigma_0$ is
area-minimizing in its homology class, $\Sigma_t$ are all totally geodesic. By induction, one can easily
show that $M$ is isometric to the quotient of $\Sigma_0 \times \mathbb{R}$. Therefore
 $$f_n = H = 0, f_{nn} = \frac{\partial f_n}{\partial t} =0, Ric_{nn} = 0$$ for all $t$.

Now consider the case when $m = 3$. Let $e_1, e_2$ be a local orthonormal frame on $\Sigma_0$. Let $S$ be
the scalar curvature on $M$; $S_f = S +\Delta f$; $K_\Sigma$ be the Gaussian curvature on $\Sigma$. Since
$\Sigma_0$ is totally geodesic,
 $$2K_{\Sigma_0} = 2R_{1221}  = S - 2Ric_{nn} = S_f - f_{11}- f_{22} = S_f - \Delta_{\Sigma_0}f.$$ In
the above equality, we have used the fact that $f_{nn} = 0$.  Since $S_f\geq 0$,
the Gauss-Bonnet theorem says that $\Sigma_0$ is either a sphere or a torus. In the torus case, $S_f = 0$
everywhere, thus on $\Sigma$, $Ric + \nabla^2 f = 0$.  So $\Sigma$ is a $2$ dimensional steady soliton.
Thus the Gaussian curvature on $\Sigma$ is nonnegative. This means that $\Sigma$ and $M$ are flat.
\end{proof}

\section{\bf{An example}}
In \cite{[FS]}, R. Schoen and D. Fischer-Colbrie proved the following theorem:
\begin{theorem}[R. Schoen and D. Fischer-Colbrie]
Let $M$ be a complete oriented 3-manifold with nonnegative scalar curvature. Let $\Sigma$ be an oriented complete stable minimal surface in $M$, then
if $\Sigma$ is compact, then it is conformal to $\mathbb{S}^2$ or a torus $\mathbb{T}^2$; if $\Sigma$ is not compact, it is conformally covered by $\mathbb{C}$.
\end{theorem}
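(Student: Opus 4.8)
The plan is to run the classical Fischer-Colbrie--Schoen argument, feeding the Gauss equation into the stability inequality. Setting $f\equiv 0$ in Corollary 1, stability of the minimal surface $\Sigma$ reads $\int_\Sigma(|\nabla\phi|^2-\phi^2(Ric(N,N)+|A|^2))\ge 0$ for every $\phi\in C_c^\infty(\Sigma)$, where $N$ is the unit normal. First I would rewrite the potential using the three-dimensional Gauss equation: if $e_1,e_2$ is an orthonormal frame tangent to $\Sigma$ and $S$ is the scalar curvature of $M$, then minimality gives $K_\Sigma=R_{1212}-\tfrac12|A|^2$, while $\tfrac12 S=R_{1212}+Ric(N,N)$, so that $Ric(N,N)+|A|^2=\tfrac12 S+\tfrac12|A|^2-K_\Sigma$. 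Substituting and discarding the nonnegative term $\tfrac12 S+\tfrac12|A|^2$ (this is where $S\ge 0$ enters) yields
$$\int_\Sigma(|\nabla\phi|^2+K_\Sigma\phi^2)\ge 0\quad\text{for all }\phi\in C_c^\infty(\Sigma),$$
i.e. the Schr\"odinger operator $-\Delta_\Sigma+K_\Sigma$ is nonnegative. This inequality drives both cases.

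For the compact case the argument is short. Taking $\phi\equiv 1$ gives $\int_\Sigma K_\Sigma\ge\int_\Sigma(\tfrac12 S+\tfrac12|A|^2)\ge 0$, so Gauss--Bonnet forces $\chi(\Sigma)\ge 0$. As $\Sigma$ is closed and orientable it must have genus $0$ or $1$, and uniformization identifies these conformally with $\mathbb{S}^2$ and a flat torus $\mathbb{T}^2$, giving the stated conclusion.

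The noncompact case is where the real work lies. From nonnegativity of $-\Delta_\Sigma+K_\Sigma$ I would invoke the correspondence between stability of a Schr\"odinger operator on a surface and existence of a positive solution (a main lemma of \cite{[FS]}; cf. Moss--Piepenbrink/Agmon theory) to produce $u>0$ with $\Delta_\Sigma u=K_\Sigma u$. A positive solution pulls back to a positive solution, so I may pass to the universal cover and assume $\Sigma$ simply connected. Passing to the conformal metric $\hat g=u^2 g$ and using $\Delta_\Sigma\log u=K_\Sigma-|\nabla u|^2/u^2$, a direct computation gives the new Gauss curvature $\hat K=u^{-4}|\nabla u|^2\ge 0$; thus $g$ is conformal to a metric of nonnegative curvature. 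Since conformal type is a conformal invariant, it suffices to determine the type of $\hat g$, and I would then conclude via the Blanc--Fiala--Huber theorem that a complete simply connected surface of nonnegative curvature is conformally $\mathbb{C}$ rather than the disk.

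The main obstacle is hidden in that last sentence: Blanc--Fiala--Huber requires a \emph{complete} metric, and there is no reason for $\hat g=u^2 g$ to be complete, since $u$ may decay along the ends. Resolving this is the analytic heart of the theorem. The cleanest route I know bypasses the conformal change: I would feed a logarithmic cutoff $\phi=\phi(r)$ of the geodesic distance into $\int(|\nabla\phi|^2+K_\Sigma\phi^2)\ge 0$ and combine it with Gauss--Bonnet on geodesic balls to derive the Fischer-Colbrie--Schoen quadratic area bound $\mathrm{Area}(B_r)\le Cr^2$. Quadratic area growth gives $\int^\infty r\,dr/\mathrm{Area}(B_r)=\infty$, so by the Grigor'yan--Karp criterion the surface is parabolic; in two dimensions parabolicity is conformally invariant and pins down the parabolic conformal type, so the universal cover is conformally $\mathbb{C}$. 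I expect the curvature-integral bookkeeping in this area-growth estimate to be the most delicate step.
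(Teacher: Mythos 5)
The first thing to say is that the paper does not prove this statement: Theorem 2 is quoted verbatim from Fischer-Colbrie and Schoen \cite{[FS]} solely to motivate the counterexample constructed in Section 4, so there is no in-paper proof to compare against. Judged on its own terms, your reduction is correct: the identity $Ric(N,N)+|A|^2=\tfrac12 S+\tfrac12|A|^2-K_\Sigma$ for a minimal surface follows from the Gauss equation exactly as you write it, the resulting inequality $\int_\Sigma(|\nabla\phi|^2+K_\Sigma\phi^2)\ge\int_\Sigma\phi^2(\tfrac12 S+\tfrac12|A|^2)\ge 0$ is the right starting point, and the compact case ($\phi\equiv 1$ plus Gauss--Bonnet) is complete and correct.

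In the noncompact case you have correctly located the trap (completeness of $u^2g$), but two points in your bypass deserve flagging. First, the quadratic area bound is not mere ``bookkeeping'': that $-\Delta_\Sigma+K_\Sigma\ge 0$ forces $\mathrm{Area}(B_r)\le Cr^2$ on a complete noncompact surface is itself a substantial theorem (Colding--Minicozzi, Castillon; it holds for $-\Delta+aK\ge 0$ exactly when $a>1/4$, your case being $a=1$), whose proof requires a delicate interplay of the logarithmic cutoff, Gauss--Bonnet with boundary terms on geodesic balls, and control at the cut locus; as written, your proof defers its hardest analytic step to a citation-shaped gap. Second, there is a logical wrinkle at the end: parabolicity of $\Sigma$ does not determine the conformal type of its universal cover (a compact hyperbolic surface is parabolic yet covered by the disk), so the whole argument must be run upstairs --- pull back a positive Jacobi function to the universal cover, recover the stability inequality there via the Moss--Piepenbrink/Fischer-Colbrie--Schoen correspondence, and only then derive quadratic area growth and hence parabolicity of the cover. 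You gesture at this in the conformal-change paragraph but drop it in the final one. For the record, \cite{[FS]} close the completeness gap differently, by proving a structure theorem for complete metrics on the disk admitting such a positive solution rather than passing through area growth; both routes are legitimate, and yours is the one that has become standard in later work.
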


In view of Theorem 2, it is natural to ask whether we can weaken the condition in theorem 1 when $dim(M) = 3$. We will show that at least locally,
even if the Bakry-Emery Ricci curvature is nonnegative, the stability of a weighted stable minimal surface $\Sigma$ does not provide any information on the conformal structure on $\Sigma$.

Let $M^3$ be an oriented manifold with nonnegative Bakry-Emery Ricci curvature and $\Sigma$ be an oriented stable $f$-minimal surface in $M$.
In this section we will give an explicit example so that $\Sigma$ is hyperbolic.

\bigskip

Let $(\Sigma, ds_{\Sigma}^2)$ be a complete surface with curvature $-1$. Let $M = (-\frac{1}{2}, \frac{1}{2}) \times \Sigma$ and define metric on $M$ by $$ds^2 = dt^2 + g(t)ds_{\Sigma}^2.$$ Note that the metric on $M$ is not complete. Let $p\in M$ and consider a product chart $U\ni p$ such that $e_1=\frac{\partial}{\partial x_1}, e_2=\frac{\partial}{\partial x_2}$ are tangential to $\Sigma_t$ and $\frac{\partial}{\partial t} = e_3$ on $U$.
We may assume that $e_1, e_2, e_3$ is an orthogonal frame in $U$ and $ds_\Sigma^2(e_1, e_1) = ds_\Sigma^2(e_2, e_2)= 1$. Then $$\langle\nabla_{e_1}e_3, e_1\rangle=\langle\nabla_{e_2}e_3, e_2\rangle=\frac{1}{2}g'(t),$$  $$\langle\nabla_{e_1}e_3, e_2\rangle = \langle\nabla_{e_2}e_3, e_1\rangle = 0.$$ Therefore, $\nabla^{\Sigma_t}A = 0$ for all $t$.
 By Gauss equation, $$K_{\Sigma_t} - \frac{R_{1221}}{g^2} = \frac{A_{11}A_{22}}{g^2} = \frac{\langle\nabla_{e_1}e_3, e_1\rangle\langle\nabla_{e_2}e_3, e_2\rangle}{g^2}.$$ Since the Gaussian curvature $K_{\Sigma_t} = -\frac{1}{g}$,  $$R_{1221} = -g-\frac{1}{4}g'^2.$$
It is easy to see that $\nabla_{e_3}e_3 \equiv 0$, thus
\begin{equation}
\begin{aligned}
R_{1331} &= \langle\nabla_{e_1}\nabla_{e_3}e_3, e_1\rangle-\langle\nabla_{e_3}\nabla_{e_1}e_3,e_1\rangle\\&=-\langle\nabla_{e_3}\nabla_{e_1}e_3,e_1\rangle\\&=
-(e_3(\langle\nabla_{e_1}e_3, e_1\rangle)-|\nabla_{e_3}e_1|^2)\\&=
-\frac{1}{2}g''+\frac{1}{4}\frac{g'^2}{g}.
 \end{aligned}
\end{equation}
 From the same computation, we see that $R_{1332} = 0$.
By Codazzi equation, $$R_{1223} = (\nabla^{\Sigma_t}_{e_1}A)(e_2, e_2) - (\nabla^{\Sigma_t}_{e_2}A)(e_1, e_2) = 0.$$ Therefore

$$Ric_{11} = \frac{R_{1221}}{g} + R_{1331} = -1-\frac{1}{2}g'' = Ric_{22},$$ $$Ric_{33} = -\frac{g''}{g}+\frac{1}{2}(\frac{g'}{g})^2,$$
$$Ric_{12} = Ric_{13} = Ric_{23} = 0.$$

Let $f =f(t)$ be a function of $M$, then $$f_{11} = - \langle\nabla f, \nabla_{e_1}e_1\rangle=\frac{g'f'}{2}=f_{22},$$ $$f_{12}=f_{13}=f_{23}=0, f_{33} = f''.$$

Therefore $$Ric_f(e_1, e_1) = -1-\frac{g''}{2}+\frac{f'g'}{2}, Ric_f(e_3, e_3) = \frac{-2g''g+g'^2+2g^2f''}{2g^2}$$

 If $f=-2t^2, g = 1-2t^2$, then
one gets that $$Ric_f(e_2, e_2) = Ric_f(e_1, e_1) = 1+8t^2 \geq 0, Ric_f(e_3, e_3) = 4(\frac{1}{(1-2t^2)^2}-1)\geq 0.$$
Therefore, $M$ has nonnegative Bakry-Emery Ricci curvature.
Moreover, the second fundamental form and $Ric_f(e_3, e_3)$ vanish at $t = 0$. According to corollary 1, $\Sigma_0$ is a stable $f$-minimal surface in $M$. However, $\Sigma$ is hyperbolic.

\section{\bf{Applications to the noncompact case}}

Now consider the case when $\Sigma$ is noncompact. The following proposition follows from a simple cut-off argument:
\begin{prop}
{Let $(M^m, g, e^{-f}dv)$ be an oriented complete Riemannian manifold and $\Sigma$ be a complete noncompact oriented stable $f$-minimal hypersurface in $M$.
 If $Ric_f \geq 0$ on $\Sigma$ and that the weighted volume growth of $\Sigma$ with respect to its intrinsic distance to a point $p \in \Sigma$ satisfy $$V_{\Sigma, f}(B_p(r)) \leq Cr^2$$ for all large $r$ , then $\Sigma$ is totally geodesic and $Ric_f(n, n) = 0$.}
\end{prop}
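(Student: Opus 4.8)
The plan is to feed a well-chosen family of cutoff functions into the stability inequality from Corollary 1,
$$\int_{\Sigma}\bigl(|\nabla \lambda|^2 - \lambda^2(Ric_f(n,n) + |A|^2)\bigr)e^{-f}\geq 0,$$
and to exploit the quadratic weighted volume growth to make the gradient term disappear in the limit. Since the hypotheses $Ric_f \geq 0$ and $|A|^2 \geq 0$ make the integrand $Ric_f(n,n)+|A|^2$ nonnegative, it suffices to exhibit compactly supported functions $\lambda_R$ with $\lambda_R \to 1$ pointwise and $\int_\Sigma |\nabla\lambda_R|^2 e^{-f} \to 0$. Then the stability inequality forces $\int_\Sigma \lambda_R^2(Ric_f(n,n)+|A|^2)e^{-f}\le \int_\Sigma|\nabla\lambda_R|^2e^{-f}\to 0$, and Fatou's lemma applied to the nonnegative integrand yields $\int_\Sigma (Ric_f(n,n)+|A|^2)e^{-f}=0$, so that $|A|\equiv 0$ and $Ric_f(n,n)\equiv 0$, which is exactly the conclusion.

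The key observation is that a naive linear cutoff does not suffice: if $\lambda=1$ on $B_p(R)$ and interpolates linearly to $0$ on $B_p(2R)\setminus B_p(R)$, then $|\nabla\lambda|\le 1/R$ and $\int_\Sigma|\nabla\lambda|^2e^{-f}\le R^{-2}V_{\Sigma,f}(B_p(2R))\le 4C$, which is only bounded, not small. Instead I would use the logarithmic cutoff, with $\rho(x)=d_\Sigma(p,x)$ the intrinsic distance,
$$\lambda_R(x)=\begin{cases} 1 & \rho\le R,\\[2pt] \dfrac{2\log R-\log\rho}{\log R} & R\le\rho\le R^2,\\[4pt] 0 & \rho\ge R^2.\end{cases}$$
Since $\rho$ is globally Lipschitz with $|\nabla\rho|=1$ almost everywhere, $\lambda_R$ is an admissible compactly supported test function, and on the annulus $\{R\le\rho\le R^2\}$ one computes $|\nabla\lambda_R|=(\rho\log R)^{-1}$ almost everywhere.

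The main computation is then the bound on $\int_\Sigma|\nabla\lambda_R|^2e^{-f}=(\log R)^{-2}\int_{R\le\rho\le R^2}\rho^{-2}e^{-f}$, which I would handle by a dyadic decomposition of the annulus into the pieces $\{2^k\le\rho\le 2^{k+1}\}$ with $k$ ranging over roughly $[\log_2 R,\,2\log_2 R]$. On each such piece $\rho^{-2}\le 2^{-2k}$, while quadratic growth gives weighted volume at most $C(2^{k+1})^2=4C\,2^{2k}$, so each piece contributes at most $4C$ to $\int\rho^{-2}e^{-f}$; summing the $O(\log R)$ pieces gives $\int_{R\le\rho\le R^2}\rho^{-2}e^{-f}\le C'\log R$, and hence
$$\int_\Sigma|\nabla\lambda_R|^2e^{-f}\le \frac{C'\log R}{(\log R)^2}=\frac{C'}{\log R}\to 0\quad\text{as }R\to\infty.$$
I expect the only genuine subtlety to be the regularity of $\rho$ at the cut locus, but because $\rho$ is Lipschitz this does not affect the Dirichlet integral, so the argument reduces to the bookkeeping of the dyadic sum above.
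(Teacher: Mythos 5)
Your proposal is correct and follows essentially the same route as the paper: the paper's proof of this proposition uses exactly the same logarithmic cutoff (equal to $1$ on $B_p(a)$, equal to $(2\log a-\log r)/\log a$ on the annulus $a<r<a^2$, and vanishing beyond $a^2$) plugged into the stability inequality of Corollary 1, and likewise concludes that the right-hand side is $O(1/\log a)$. The only difference is bookkeeping --- the paper bounds $\int_a^{a^2} V'(r)r^{-2}(\log a)^{-2}\,dr$ by integrating by parts against $V(r)\le Cr^2$, whereas you use a dyadic decomposition of the annulus --- and both yield the same estimate.
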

\begin{proof}
Let $r$ be a distance function to $p\in M$. Given any $a > 1$, consider the cut-off function
\begin{equation}
\lambda(r) = \left\{
\begin{array}{rl} 1 & 0 \leq r \leq a \\
\frac{2\log a - \log r}{\log a} & a< r < a^2 \\ 0 & r\geq a^2.
\end{array}\right.
\end{equation}

 Define $V(r) = \int_{B_{\Sigma}(p,r)}e^{-f}$. Plugging this in the stability inequality in corollary $1$, we find that
\begin{equation}
\begin{aligned}
&\int_{B_{\Sigma}(a)}(Ric_f(n,n)+|A|^2)|\lambda|^2e^{-f} \\&\leq \int_{B_{\Sigma}(a^2)}|\nabla\lambda|^2e^{-f}\\&=\int_{a}^{a^2}\frac{V'(r)}{r^2\log^2a}dr
\\&=\frac{V(r)}{r^2\log^2a}|_{r=a}^{r=a^2}-\int_{a}^{a^2}V(r)(\frac{1}{r^2\log^2a})'dr\\&
\leq \frac{C}{\log^2a}+C\frac{1}{\log^2a}\int_{a}^{a^2}\frac{dr}{r} \\&
=O(\frac{1}{\log a}).
 \end{aligned}
\end{equation}
The proposition follows by taking $a\to \infty$.
\end{proof}

Now recall the following theorem in \cite{[WW]}\cite{[MW]}
\begin{lemma}
{Let $(M^m, e^{-f}dv)$ be a smooth metric measure space with $Ric_f \geq 0$, then along any miminizing geodesic starting from $x\in B_p(R)$ we have
$$\frac{J_f(x, r_2, \xi)}{J_f(x, r_1, \xi)}\leq e^{4A(R)}(\frac{r_2}{r_1})^{m-1}$$
for $0 < r_1<r_2<R$. In particular, for $0 < r_1<r_2<R$, the weighted area of the geodesic spheres satisfy $$\frac{ A_f(\partial B_x(r_2))}{A_f(\partial B_x(r_1))} \leq e^{4A(R)}\frac{r_2^{m-1}}{r_1^{m-1}}.$$ Here $A(R) = Sup_{x \in B_x(3R)}|f|(x)$} and $J_f(x, r, \xi)=e^{-f}J(x, r, \xi)$ is the $e^{-f}$ weighted volume in geodesic polar coordinates.
\end{lemma}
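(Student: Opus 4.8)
The plan is to run the standard weighted comparison-geometry argument for $Ric_f\ge 0$ (this is the Wei--Wylie / Munteanu--Wang weighted volume comparison cited as \cite{[WW]}\cite{[MW]}), organized around a weighted Riccati inequality for the distance function, followed by one integration in the radial variable and then an integration over directions. First I would fix a unit direction $\xi\in T_xM$, write $r$ for the distance along $\exp_x(r\xi)$, and work in geodesic polar coordinates. Let $J(x,r,\xi)$ be the unweighted density, so that $\partial_r\log J=\Delta r=:H$ is the mean curvature of the geodesic sphere (with $n:=m-1$), and let $J_f=e^{-f}J$ with weighted mean curvature $H_f:=\partial_r\log J_f=H-\partial_r f=\Delta_f r$. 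The key computational step is the weighted Riccati inequality: from the Bochner formula applied to $|\nabla r|\equiv 1$ one has $\partial_r H=-|\mathrm{Hess}\,r|^2-Ric(\partial_r,\partial_r)$, and Cauchy--Schwarz on the $(m-1)$-dimensional shape operator gives $|\mathrm{Hess}\,r|^2\ge H^2/n$. Differentiating $H_f=H-\partial_r f$ and using $\partial_r^2 f=\mathrm{Hess}\,f(\partial_r,\partial_r)$ with $Ric_f=Ric+\mathrm{Hess}\,f\ge 0$ yields
\[
\partial_r H_f\ \le\ -\frac{H^2}{n}-Ric_f(\partial_r,\partial_r)\ \le\ -\frac{H^2}{n}\ \le\ 0,
\]
with the Euclidean profile $H=n/r$ saturating the inequality.

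Next I would integrate this inequality to obtain the density comparison $J_f(x,r_2,\xi)/J_f(x,r_1,\xi)\le e^{4A(R)}(r_2/r_1)^n$ for $0<r_1<r_2<R$. Taking logarithms, this is equivalent to the integral bound $\int_{r_1}^{r_2}\big(H_f-\tfrac{n}{r}\big)\,dr\le 4A(R)$. Here is where the hypothesis that $f$ is bounded enters: one compares $H_f$ against the Euclidean model $n/r$ by integrating the weighted Riccati inequality, and the only contributions that are not already of Euclidean type are radial derivatives of $f$, whose primitives reduce to endpoint values $f(x,r_i,\xi)$, each controlled by $A(R)=\sup_{B_x(3R)}|f|$ since $r_1,r_2<R$. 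Accounting for both the $e^{-f}$ weighting and the comparison, this produces the multiplicative constant $e^{4A(R)}$ in place of the sharp Euclidean monotonicity.

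Finally, to pass from the pointwise density comparison to the stated area comparison of geodesic spheres, I would integrate over the unit directions $\xi\in S^{m-1}_x$, extending $J_f(x,r,\xi)$ by $0$ beyond the cut locus. Since the set of directions still inside the cut locus at radius $r_2$ is contained in the corresponding set at $r_1<r_2$, the direction-wise inequality integrates directly to $A_f(\partial B_x(r_2))/A_f(\partial B_x(r_1))\le e^{4A(R)}(r_2/r_1)^{m-1}$, which is the assertion.

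I expect the main obstacle to be precisely the coupling visible in the Riccati inequality: the quadratic term $-H^2/n$ is written through the \emph{unweighted} mean curvature $H=H_f+\partial_r f$, and because we assume a bound only on $|f|$ and not on $\partial_r f$, there is no pointwise closed Riccati comparison purely in terms of $H_f$ — indeed the weighted density ratio $J_f/r^{m-1}$ need not be monotone, so a naive split into an unweighted Bishop term plus $\int\partial_r f$ is unavailable (the unweighted term would require $Ric\ge 0$, which we do not have). The whole force of the argument is therefore to keep the comparison in integrated form, so that the uncontrolled radial derivative of $f$ survives only as endpoint data bounded by $A(R)$; this is the technical heart of the estimate, and it is exactly what is carried out in \cite{[WW]} and \cite{[MW]}.
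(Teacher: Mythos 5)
The paper does not actually prove this lemma: it is recalled verbatim from Wei--Wylie \cite{[WW]} and Munteanu--Wang \cite{[MW]}, so there is no internal proof to compare against. Your sketch correctly reproduces the argument of those references, including the genuinely delicate point that no Riccati comparison closes pointwise in $H_f$ alone and that the estimate must be kept in integrated form. The one step you leave vague --- why the error is a fixed constant $e^{4A(R)}$ rather than an extra power of $r_2/r_1$ --- is worth making explicit, since a naive bound $|{-2f(r)/r}|\le 2A(R)/r$ would indeed accumulate a logarithm. Multiplying the Riccati inequality for $u=H-\frac{m-1}{r}$ by $r^2$, integrating from $0$, and integrating by parts twice gives
\[
H_f(r)-\frac{m-1}{r}\;\le\;-\frac{2f(r)}{r}+\frac{2}{r^2}\int_0^r f(t)\,dt\;=\;\frac{d}{dr}\Bigl(-\frac{2}{r}\int_0^r f(t)\,dt\Bigr),
\]
an exact derivative; hence $\int_{r_1}^{r_2}\bigl(H_f-\frac{m-1}{r}\bigr)\,dr$ collapses to two endpoint terms, each bounded by $2A(R)$ because $\bigl|\frac{1}{r}\int_0^r f\bigr|\le\sup|f|$. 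This is the source of the factor $4$ and the precise reason the logarithmic loss you flag in your last paragraph does not occur; with that detail supplied, your outline is a complete and correct rendition of the cited proof.
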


If $f$ is bounded, $Vol_f(B_x(r))$ has polynomial growth of order at most $m$.
\begin{prop}
{Let $(M^3, e^{-f}dv)$ be a smooth metric measure space with  $Ric_f \geq 0$ and $f$ is bounded.
 If $\Sigma$ is a complete weighted area-minizing hypersurface which is the boundary of least weighted area in $M$, then $\Sigma$ is totally geodesic and $Ric_f(n, n) = 0$.}
\end{prop}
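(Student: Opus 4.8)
The plan is to deduce the statement from Proposition 2 by showing that a weighted area-minimizing boundary automatically satisfies the quadratic weighted volume growth hypothesis $V_{\Sigma,f}(B_\Sigma(p,r)) \leq Cr^2$ of that proposition. We may assume $\Sigma$ is noncompact, since when $\Sigma$ is compact the first conclusion of Theorem 1 (taking $\lambda = 1$ in Corollary 1) already gives that $\Sigma$ is totally geodesic with $Ric_f(n,n) = 0$. Being a minimizer of the weighted area, $\Sigma$ is stable, and as the boundary of a region it is two-sided, so the stability inequality of Corollary 1 is available; everything therefore reduces to establishing the intrinsic growth bound.

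To obtain that bound, I would fix $p \in \Sigma$, write $\Sigma = \partial\Omega$, and for each ambient geodesic ball $B_p(r) \subset M$ compare $\Omega$ with the competitor $\Omega \cup B_p(r)$. The two regions differ only inside the compact ball $B_p(r)$, and the boundary of the competitor replaces $\Sigma \cap B_p(r)$ by the portion $\partial B_p(r) \setminus \Omega$ of the geodesic sphere, so minimality of $\Sigma$ gives
$$A_f(\Sigma \cap B_p(r)) \leq A_f(\partial B_p(r) \setminus \Omega) \leq A_f(\partial B_p(r)).$$
Since $f$ is bounded, the quantity $A(R)$ in Lemma 1 is bounded uniformly in $R$, so fixing a small radius and applying the lemma yields $A_f(\partial B_p(r)) \leq Cr^{m-1} = Cr^2$ for $m = 3$, with $C$ independent of $r$. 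Because the intrinsic distance on $\Sigma$ dominates the ambient distance, $B_\Sigma(p,r) \subset \Sigma \cap B_p(r)$, and therefore
$$V_{\Sigma,f}(B_\Sigma(p,r)) \leq A_f(\Sigma \cap B_p(r)) \leq Cr^2.$$
Proposition 2 now applies verbatim and delivers the conclusion.

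The main obstacle is making the comparison step rigorous. One must justify that the filling-in competitor is admissible for a weighted-area-minimizing boundary and that the area comparison holds for a suitable (almost every) radius $r$, where $\partial B_p(r)$ meets $\Sigma$ transversally; this is naturally carried out in the language of currents or sets of finite perimeter rather than smooth surfaces, using the coarea formula to select good radii. Granting this standard geometric-measure-theoretic input, the estimate is precisely the weighted analogue of the Schoen--Yau area comparison, and the remainder is bookkeeping.
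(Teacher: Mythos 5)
Your proposal is correct and follows essentially the same route as the paper: the paper's proof likewise combines the comparison $vol_f(\Sigma\cap B_x(r))\leq A_f(\partial B_x(r))$ (from the least-weighted-area boundary property) with Lemma 1 and the boundedness of $f$ to get quadratic weighted volume growth, and then invokes Proposition 2. You simply spell out the filling-in competitor and the inclusion of intrinsic balls in extrinsic ones, which the paper leaves implicit.
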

\begin{proof}
According to lemma 1, the weighted volume of the geodesic sphere has at most quadratic growth.  Since $\Sigma$ is weighted area minimizing and is a
boundary of least weighted area in $M$, $vol_f(\Sigma\vdash B_x(r)) \leq A_f(\partial B_x(r))\leq Cr^2$. Proposition 3 follows from Proposition 2.
\end{proof}

\section{\bf{Application to complete 3-manifolds with nonnegative Bakry-Emery Ricci curvature}}

The classification of complete 3-manifolds with nonnegative Ricci curvature has been complete by various authors' works.
By using the Ricci flow, Hamilton \cite{[H1]}\cite{[H2]} classified all compact 3-manifolds with nonnegative Ricci curvature.
He proved that the universal cover is either diffeomorphic to $\mathbb{S}^3$, $\mathbb{S}^2\times\mathbb{R}$ or $\mathbb{R}^3$.
In the latter two cases, the manifold splits.

In \cite{[SY1]}, Schoen-Yau proved that a complete noncompact 3-manifold with positive Ricci curvature is diffeomorphic to the Euclidean space.
Anderson-Rodriguez \cite{[AR]} and Shi \cite{[Sh]} classified complete noncompact 3-manifolds with nonnegative Ricci curvature by assuming an upper bound of the
sectional curvature. Very recently, the author \cite{[L]} classified all complete noncompact 3-manifolds with nonnegative Ricci curvature.

In view of the results above, it is natural to ask what happens to a 3-manifold when the Bakry-Emery Ricci curvature is nonnegative.
Below is a partial classification when $f$ is bounded.

\begin{theorem}
Let $(M^3,g, e^{-f}dv)$ be a complete 3-manifold with bounded $f$
and $Ric_f\geq 0$.
\begin{itemize}
\item If $M$ is noncompact, then either $M$ is contractible or through each point in $M$, there exists a
totally geodesic surface with $Ric_f(n, n) = 0$.
If in addition the rank of $Ric_f$ is at least $2$ everywhere,
then the universal of $M$ splits as a Riemann product as $\Sigma \times \mathbb{R}$.
In particular, if the Bakry-Emery Ricci curvature is positive, then $M$ is contractible.
\item If $M$ is compact, then either it is a quotient of $\mathbb{S}^3$ or the universal cover splits as a product $\Sigma\times \mathbb{R}$.
\end{itemize}
In each splitting case, $\Sigma$ is conformal to $\mathbb{S}^2$ or $\mathbb{C}$ and $f$ is constant along the $\mathbb{R}$ factor.\end{theorem}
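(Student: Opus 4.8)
The plan is to manufacture stable $f$-minimal surfaces out of the topology of $M$, feed them into the rigidity already established in Theorem 1, Proposition 2 and Proposition 3, and then upgrade the resulting infinitesimal rigidity ($\Sigma$ totally geodesic, $Ric_f(n,n)=0$) to a global product structure with the weighted Cheeger--Gromoll splitting theorem (Lichnerowicz \cite{[Lc]}, \cite{[FLZ]}), which is available precisely because $f$ is bounded. A standing observation makes the analysis go through: since $f$ is bounded, $e^{-f}$ is pinched between two positive constants, so the weighted area is uniformly equivalent to the Riemannian area, and the existence and regularity theory for area minimizers in $3$-manifolds (Schoen--Yau \cite{[SY1]}, Meeks--Yau \cite{[MY1]}, and Sacks--Uhlenbeck) transfers to the weighted functional, with minimizers smooth and embedded.

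For $M$ noncompact I would split on $\pi_2$. If $\pi_2(M)\neq 0$, I minimize $\int e^{-f}$ in a generating homology class to obtain a stable $f$-minimal sphere; being compact, Theorem 1 forces it to be totally geodesic with $Ric_f(n,n)=0$ and exhibits $M$ as a quotient of $\Sigma\times\mathbb{R}$, so a totally geodesic leaf passes through every point. If $\pi_2(M)=0$, then since an open $3$-manifold is homotopy equivalent to a $2$-complex, the universal cover is simply connected with $\pi_2=0$, hence acyclic and therefore contractible; thus either $\pi_1(M)=0$ and $M$ itself is contractible, or $\pi_1(M)\neq 0$ and $M$ is aspherical. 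In the aspherical case I produce, through each $p$, a complete stable $f$-minimal surface by the Schoen--Yau construction: minimize the weighted area in a nontrivial class on an exhaustion by geodesic balls and pass to a limit, the non-contractibility of $M$ ensuring the limit does not sweep off to infinity. By Lemma 1 its weighted volume grows at most quadratically, so Proposition 2 (or Proposition 3, realizing it as a least-weighted-area boundary) makes it totally geodesic with $Ric_f(n,n)=0$.

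Under the extra hypothesis that the rank of $Ric_f$ is at least $2$ everywhere, $Ric_f(n,n)=0$ together with $Ric_f\ge 0$ forces $n$ to span the whole kernel, so $\ker Ric_f=\mathbb{R}n$ is a smooth line field and the totally geodesic surfaces foliate $M$ with a $1$-dimensional orthogonal complement. Running the normal-flow computation of Theorem 1, namely $\frac{d}{dt}(H-f_n)=-Ric_f(n,n)-|A|^2\le 0$, shows that each parallel leaf remains totally geodesic and builds a product neighborhood; the uniqueness of the kernel direction lets the product structure propagate, and a de Rham / splitting argument yields $\widetilde M=\Sigma\times\mathbb{R}$ with $f$ constant along $\mathbb{R}$ (the latter because $f_n=H=0$ on every slice). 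If instead $Ric_f>0$, the kernel is trivial, so no totally geodesic surface with $Ric_f(n,n)=0$ can exist, and the dichotomy forces $M$ to be contractible.

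For $M$ compact: if $\pi_1(M)$ is finite, the universal cover is a compact simply connected $3$-manifold, hence $\mathbb{S}^3$ by the Poincar\'e conjecture, so $M$ is a quotient of $\mathbb{S}^3$; if $\pi_1(M)$ is infinite, the universal cover is noncompact with a cocompact action, hence contains a line by the classical Cheeger--Gromoll argument, and the weighted splitting theorem gives $\widetilde M=\Sigma\times\mathbb{R}$ with $f$ constant along $\mathbb{R}$. In every splitting case $\Sigma$ is simply connected as a factor of a simply connected cover, and since $f$ is constant along $\mathbb{R}$ the induced data on $\Sigma$ satisfy $S_f\ge 0$; the weighted form of the Fischer--Colbrie--Schoen theorem (Theorem 2) then excludes the hyperbolic type and leaves $\Sigma$ conformal to $\mathbb{S}^2$ or $\mathbb{C}$. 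I expect the main obstacle to be the noncompact aspherical step: guaranteeing that the weighted minimizers on the exhausting balls converge to a nondegenerate complete embedded surface (controlling boundaries and ruling out collapse), and then converting the local product furnished by the normal flow into a genuine global splitting of the universal cover under the rank hypothesis; by comparison the compact case rests only on the now-available Poincar\'e conjecture and on standard splitting theory.
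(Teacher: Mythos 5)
Your overall architecture (minimal surfaces $\to$ infinitesimal rigidity $\to$ splitting) matches the paper, but there are three genuine gaps, the first two of which are exactly where the paper does its real work. First, the ``through each point'' clause: your construction produces \emph{one} complete weighted-area-minimizing surface (via Anderson's lemma in a $\mathbb{Z}$-cover, with quadratic weighted volume growth from Lemma 1), but nothing in ``non-contractibility ensures the limit does not sweep off to infinity'' forces that surface to pass through, or even near, a prescribed point $p$. The paper's solution is an Ehrlich-type conformal perturbation $g(t)=e^{2t\lambda}g_0$ with $\lambda=-\rho^5$ supported near $p$, arranged so that $Ric^t_f>0$ on an annulus about $p$ and along a loop generating $\pi_1$; Proposition 2 then forbids the perturbed minimizer from avoiding the neighborhood of $p$ (since it would meet the region where $Ric_f>0$), and a limit as $t\to 0$ plants a totally geodesic surface through $p$. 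This localization mechanism is entirely absent from your proposal. Second, under the rank-$\ge 2$ hypothesis, a foliation by totally geodesic hypersurfaces does \emph{not} by itself yield a metric product (compare the foliation of $\mathbb{H}^2$ by geodesics asymptotic to a common ideal point). The paper must show the lapse function $\lambda$ of the foliation is constant: it derives $\Delta_f\lambda=0$ from $H=f_n=0$ and $Ric_f(n,n)=0$, and then applies a weighted Liouville theorem (Lemma 3, using the quadratic weighted volume growth) to the positive $f$-harmonic $\lambda$. Only then does $[X,N]=0$ give $\nabla N\equiv 0$ and the local product. Your ``normal-flow computation builds a product neighborhood; the product structure propagates'' skips this step; note also that the monotonicity $\frac{d}{dt}(H-f_n)\le 0$ only forces equidistant leaves to be totally geodesic when combined with area-minimization, which is unavailable verbatim for noncompact leaves of infinite weighted area.

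Third, and more minor: to identify the conformal type of $\Sigma$ in the splitting case you invoke ``the weighted form of the Fischer--Colbrie--Schoen theorem (Theorem 2),'' but Section 4 of the paper is devoted to showing that precisely this weighted generalization \emph{fails} — a stable $f$-minimal surface in a space with $Ric_f\ge 0$ can be hyperbolic. The correct argument in the splitting case does not go through stability at all: on the totally geodesic factor one has $Ric_\Sigma+\nabla^2 f\ge 0$, and either the conformal metric $\tilde g=e^{-f}g$ (complete because $f$ is bounded) has nonnegative curvature, or one uses the at-most-quadratic (weighted) volume growth together with the Cheng--Yau Liouville theorem to exclude the Poincar\'e disk. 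Your conclusion is right, but the tool you cite does not exist and the paper explicitly constructs a counterexample to it. The remaining pieces of your outline (torsion-freeness of $\pi_1$, the $\pi_2\neq 0$ case, the compact case via Poincar\'e and the weighted splitting theorem) agree with the paper, though the paper handles $\pi_2\neq 0$ by counting ends and applying Lichnerowicz's splitting rather than by producing a compact minimal sphere, which sidesteps the existence and homological-minimization issues your sphere would raise.
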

\begin{proof}
First we consider the case when $M$ is noncompact.
The argument is similar to \cite{[SY1]}\cite{[L]}.
Assume $M$ is simply connected, if $\pi_2(M) \neq 0$,  according to Lemma 2 in \cite{[SY1]}, $M$ must have at least
 two ends. From Lichnerowicz's extension of the Cheeger-Gromoll splitting theorem \cite{[Lc]}, the universal cover
 splits. So we assume $\pi_2(M) = 0$. Therefore, the universal cover of $M$ is contractible. If $M$ is not simply
connected, Schoen and Yau \cite{[SY1]} proved that $\pi_1(M)$ must have no torsion elements. Thus, after
 replacing $M$ by a suitable covering, we may assume that $\pi_1(M) = \mathbb{Z}$ and that $M$ is orientable.

 Recall lemma $2.2$ in \cite{[An1]} by Anderson:
 \begin{lemma}(Anderson)
{ Let $M$ be a complete Riemannian manifold with finitely generated homology $H_1(M ,\mathbb{Z})$. Then any
non-zero line $\mathbb{R}\cdot\alpha, \alpha\in H_1(M, \mathbb{Z})$ gives rise to a complete homologically area-minimizing
hypersurface $\Sigma_{\alpha}$, which is the boundary of least area in a cover $\mathbb{Z} \rightarrow \overline{M}\rightarrow M$. Moreover,
the volume growth of $\Sigma_{\alpha}$ satisfies $vol(\Sigma\vdash B^{\overline M}(r)) \leq vol(\partial B^{\overline M}(r))$ and the intersection
number $I(\Sigma, \alpha) \neq 0$.
}
\end{lemma}
The proof of the above lemma in \cite{[An1]} can be carried out without any modification to weighted volume case.
 Taking $\alpha$ to be the generator of $H_1(M, \mathbb{Z})$, we can find a complete oriented boundary $\Sigma$ of least weighted area in the universal cover
$\tilde{M}$.  By proposition 3, $\Sigma$ is totally geodesic and $Ric_f(n, n) = 0$.  If $Ric_f > 0$ on $M$, then this is a contradiction.

Now consider the case when $Ric_f \geq 0$.
We shall use a perturbation argument in \cite{[E]}\cite{[L]}.
For any point $p\in M$, consider a family of metric $g(t) = e^{2t\lambda}g_0$, where $\lambda=\lambda(x)$ is a function on $M$.
Let $(U, g_{ij},x_i)$ be a normal coordinate for $g_0$ at $p$ such that $\frac{\partial}{\partial x_i} = e_i$. We have $$f^t_{ij} = e_je_i(f) -(\nabla^t_{e_j}e_i) f,$$ $$\Gamma^s_{ij}(g(t)) = \frac{1}{2}g^{sl}(t)(\frac{\partial g_{il}(t)}{\partial x_j} + \frac{g_{jl}(t)}{\partial x_i} - \frac{\partial g_{ij}(t)}{\partial x_l}).$$ Then at $p$,$$\Gamma^s_{ij}(g(t)) = t(\lambda_j\delta_{is}+\lambda_i\delta_{js}-\lambda_s\delta_{ij}).$$
Therefore,
\begin{equation}
\begin{aligned}
f^t_{ij}-f_{ij} &= -\Gamma^s_{ij}(g(t))f_s \\&= t(f_s\lambda_s\delta_{ij}-\lambda_if_j-\lambda_jf_i)\\& \geq-3t|\nabla f||\nabla\lambda|.
\end{aligned}
\end{equation}
Let $m = dim(M) = 3$.
Recall that
$$Ric^t(v, v) = (Ric(v, v) - t(m-2)\lambda_{vv} - t\Delta\lambda + t^2(m-2)(v(\lambda)^2-|\nabla\lambda|^2))$$ for $|\nu|_{g_0} = 1.$
Let $r(x) = dist(x, p)$ on $M$.
For a very small $R > 0$, consider the function
$\rho = R-r$ for $\frac{R}{2}< r < R$. Then we extend $\rho$ to be a positive smooth function for $0 \leq r < \frac{R}{2}$.
Define $\lambda = -\rho^5$.

Now $$\nabla^2 (\rho^5)(v, v) = 20\rho^3v(\rho)^2+ 5\rho^4\nabla^2(\rho)(v, v).$$ For $aR < r < R$, we have
\begin{equation}
\begin{aligned}
Ric^t(v, v)+f^t_{vv} &\geq Ric^0(v, v)+f^0_{vv} + 20t\rho^3+ 5t\rho^4(\Delta \rho+ \\&(m-2)\nabla^2 (\rho)(v, v)) -25(m-2)t^2\rho^8-15t\rho^4|\nabla f|.
\end{aligned}
\end{equation} Using the fact that the manifold is almost Euclidean near $p$, for small $R$, we have $$|\Delta \rho + (m-2)\nabla^2 \rho(v, v)|  \leq \frac{9(2m-3)}{8(R-\rho)}.$$ Therefore, there exists small $R > 0$ such that for all small $t$, $Ric^t_f(v, v) > 0$ in an annulus $B_p(R)\backslash B_p(aR)$ for $a = \frac{7}{8}$. The metric remains the same outside $B_p(R)$. The deformation is $C^4$ continuous with respect to the metric and $C^{\infty}$ with respect to $t$.

Let $\gamma$ be a closed curve in $M$ which represents the generator of $\pi_1(M)$. We can apply the perturbation finitely many times such that $Ric_f > 0$ on $\gamma$ and $Ric_f$ is nonnegative on $M$ except a small neighborhood $U$ of $p$. Then for the perturbed metric $g_t$, we can apply lemma 2 to obtain a complete oriented boundary $\Sigma$ of least weighted area in the universal cover
$\tilde{M}$.  Since $g_t$ is uniformly equivalent to $g_0$, we can show $\Sigma_t$ has quadratic weighted volume growth. Let $q \in \Sigma_t$, then for any $r > 0$,
\begin{equation}
\begin{aligned}
vol_{g(t)}(\Sigma_t\vdash B_{g(t)}(q, \tilde{M})(r)) &\leq vol_{g(t)}(\Sigma_t\vdash B_{g(0)}(q, \tilde{M})(Cr)) \\&\leq vol_{g(t)}(\partial B_{g(0)}(q, \tilde{M})(Cr))\\&\leq Cvol_{g(0)}(\partial B_{g(0)}(q, \tilde{M})(Cr))\\&\leq C_1r^2.
\end{aligned}
\end{equation}

If $\Sigma_t$ does not intersect the preimage of $U$ in $\tilde{M}$, then on $\Sigma_t$, $Ric_f \geq 0$ and $Ric_f>0$ at $\Sigma_t \cap \gamma$. This contradicts proposition 2.

For each $\Sigma_t$, we can find deck transformation $l_t$ on $\tilde{M}$ such that $l_t(\Sigma_t)$ intersects the preimage of $U$ at some fixed compact set in $\tilde{M}$.
Therefore, if we shrink the size of the neighborhood of $p$ and let $t\to 0$ sufficiently fast, a subsequence of $\Sigma_t$ will converge to a weighted area minimizing surface $\Sigma$ satisfying $$vol_{g(0)}(\Sigma\vdash B_{g(0)}(q, \tilde{M})(r)) \leq Cr^2.$$
Thus, by proposition 2,  $\Sigma$ is totally geodesic and $Ric_f(n, n) = 0$.  Since $p$ is arbitrary,
though each point there exists a totally geodesic surface with $Ric_f(n, n) = 0$.

\bigskip

Now we use the assumption that the rank of $Ric_f$ is at least $2$ everywhere.
Then through each point $p\in \tilde{M}$, there exists a unique totally geodesic surface.
Therefore we have a foliation on $\tilde{M}$. We can parametrize the surfaces as $\Sigma_t$.

Let $N$ be the unit normal vector and $\lambda N$ be the variational vector field of $\Sigma_t$.
Since the smooth family of surfaces $\Sigma_t$ never intersect with each other, $\lambda$ is nonnegative. A simple computation shows that the
variational vector field of these totally geodesic surfaces satisfies $$\Delta\lambda + \lambda Ric(n, n) = 0.$$ Since
 $$H = f_n = 0,$$
\begin{equation}
 \begin{aligned}
0&=\frac{df_n}{dt}\\&= \lambda f_{nn}+\langle \nabla f, \nabla_{\lambda N} N\rangle\\&
=\lambda f_{nn} + \sum\limits_{i=1}^{2}\langle\nabla f, e_i\rangle\langle e_i, \nabla_{\lambda N} N\rangle\\&
=\lambda f_{nn} - \langle \nabla f, \nabla\lambda\rangle.
 \end{aligned}
\end{equation}
In the above computation, $e_i$ is an orthonormal frame on an open set of $\Sigma$.

 But $$0 = Ric_f(n, n) = Ric(n, n) + f_{nn},$$
 thus we have $$\Delta_f\lambda = \Delta\lambda - \langle\nabla\lambda, \nabla f\rangle= 0$$ on $\Sigma$.

The lemma below is close to corollary 1 in \cite{[CY]}.
\begin{lemma}
For a smooth metric measured space $(M, g, e^{-f}dv)$ with quadratic weighted volume growth, if $\lambda$ is a positive function which satisfies $\Delta_f\lambda = 0$, then $\lambda$ is a constant.
\end{lemma}
\begin{proof}
 Let $\lambda = e^h$, then $$\Delta h + |\nabla h|^2 -\langle\nabla h, \nabla f\rangle=0.$$
Let $\varphi$ be a cut-off function, we find
$$\int\varphi^2\Delta he^{-f} + \int\varphi^2|\nabla h|^2e^{-f} - \int\varphi^2\langle\nabla h, \nabla f\rangle e^{-f} = 0.$$
By integration by parts, $$\int\varphi^2(\Delta h)e^{-f} = -\int h_i2\varphi\varphi_ie^{-f}+\int h_i\varphi^2f_ie^{-f}.$$
Therefore $$\int\varphi^2|\nabla h|^2e^{-f} = 2\int\varphi_ih_i\varphi e^{-f}\leq 2(\int\varphi^2|\nabla h|^2e^{-f})^{\frac{1}{2}}(\int|\nabla\varphi|^2e^{-f})^{\frac{1}{2}}.$$ Thus $$\int\varphi^2|\nabla h|^2e^{-f} \leq 4\int|\nabla\varphi|^2e^{-f}.$$
Now we can use the same cut-off function in proposition 2 to show that $\nabla h \equiv 0$. Thus $\lambda$ is a constant.
\end{proof}

Since $\lambda$ is nonnegative, by lemma 3, $\lambda$ is constant. After a reparametrization of $\Sigma_t$, we may assume $\lambda = 1$.
Now for $X\in T\Sigma_t$, $\nabla_XN = 0$, since $\Sigma_t$ is totally geodesic. Since $\lambda$ is a constant, we may assume $[X, N] = 0$. $\langle\nabla_NN, X\rangle = -\langle N, \nabla_NX\rangle
=-\langle N, \nabla_XN\rangle = 0$. Thus $\nabla N \equiv 0$.
Therefore $M$ is locally isometric to $\Sigma\times\mathbb{R}$.
$f$ is constant along the $\mathbb{R}$ factor, since $f_n = 0$.

\bigskip

Now consider the case when $M$ is compact.  If the universal cover is compact, then according to Perelman's
solution to the Poincare conjecture, $M$ is covered by $\mathbb{S}^3$. If the universal cover $\tilde{M}$ is noncompact, then according
to Theorem 6.6 in \cite{[WW]},  $\tilde{M}$ splits as a product $\Sigma\times \mathbb{R}$.

Finally, we show that in the splitting case, $\Sigma$ is conformal to $\mathbb{C}$ or $\mathbb{S}^2$.
There are two methods to do this. Note that on $\Sigma$, $$Ric_\Sigma + \nabla^2 f \geq 0.$$
Consider the conformal change of the metric $\tilde{g} = e^{-f}g$ on $\Sigma$, then the tensor $$Ric_\Sigma(\tilde{g}) = Ric_\Sigma(g)
 + \frac{1}{2}(\Delta_\Sigma f) g\geq 0.$$
As $f$ is bounded, $\tilde{g}$ is complete. Since $\Sigma$ is simply connected, $\Sigma$ is conformal to $\mathbb{C}$ or $\mathbb{S}^2$.

The second way is this: By lemma 1, the weighted volume growth of $\Sigma$ is at most quadratic. Since $f$ is bounded, the volume growth of $\Sigma$
is at most quadratic. If $\Sigma$ is conformal to the Poincare disk, then there exists a nontrivial bounded harmonic function  on $\Sigma$.
But according to corollary 1 in \cite{[CY]}, the function is a constant. This is a contradiction.

\end{proof}
\begin{remark}
The bounded condition of $f$ cannot be dropped in the above theorem.  For example, consider the warped product metric
$ds^2 = dt^2 + g(t)ds_{\Sigma}^2$ on
$M = \mathbb{S}^2 \times \mathbb{R}$. Here $ds_{\Sigma}^2$ is the standard metric on $\mathbb{S}^2$ with curvature $1$.
Consider an orthogonal frame $e_1, e_2, e_3$ on $M$ such that $ds_{\Sigma}^2(e_1, e_1) = ds_{\Sigma}^2(e_2, e_2) = 1$ and $\frac{\partial}
{\partial t} = e_3$.

If we take $f$ as a function of $t$ on $M$, then by similar computations in section $4$, we see
$$Ric_f(e_1, e_1) = 1-\frac{g''}{2}+\frac{f'g'}{2}, Ric_f(e_3, e_3) = \frac{-2g''g+g'^2+2g^2f''}{2g^2}.$$

If $f(t) = t^2$, $g(t) = e^t$, then one can check that $Ric_f > 0$, however, $M$ is not a Riemann product or a contractible manifold.

\end{remark}


\begin{thebibliography}{99}
\bibitem{[An1]} M. Anderson, \emph{On the topology of complete manifolds of nonnegative Ricci curvature}, Topology, Vol.29, No.1, pp 41-55, 1990.
\bibitem{[An2]} M. Anderson, \emph{On Area-Minimizing Hypersurfaces in manifolds with nonnegative Ricci curvature}, Indiana Univ. Math. J. 32(1983), no. 5, 745-760.
\bibitem{[AR]} M. Anderson and L. Rodriguez, \emph{Minimal surfaces and 3-manifolds with nonnegative Ricci curvature}, Math. Ann. 284, 461-475(1989).
\bibitem{[Bay]} V. Bayle, \emph{Propri\'{e}t\'{e}s de concavit\'{e} du profil isop\'{e}rim\'{e}trique et applicationes}, graduate thesis, Institut Fourier, Univ. Joseph-Fourier, Grenoble I, 2004.
\bibitem{[BE]} D. Bakry and M. Emery, \emph{Diffusions hypercontractives}, In Seminaire de probabilities, XIX, 1983/1984, 177-206, Lecture Notes in
Math. 1123, Springer, Berlin, 1985.
\bibitem{[CG]} J. Cheeger and D. Gromoll, \emph{The splitting theorem for manifolds of nonnegative Ricci curvature}, J. Diff. Geom. 6, 119-128(1971).
\bibitem{[CY]} S.Y. Cheng and S. T. Yau, \emph{Differential Equations on Riemannian Manifolds
and their Geometric Applications}, Comm. Pure. Appl. Math. Vol 28, 333-354 (1975).
\bibitem{[CZ]} X. Cheng, T. Mejia and D. T. Zhou, \emph{Stability and compactness for complete $f$-minimal surfaces}, arxiv: 1210.8076.
\bibitem{[CZ1]} X. Cheng, T. Mejia and D. T. Zhou, \emph{Eigenvalue estimate and compactness for closed $f$-minimal surfaces},
arxiv: 1210.8448.
\bibitem{[E]} P. Ehrlich, \emph{Metric deformation of curvature}, Geometriae Dedicata 5 (1976), no. 1, 1-23.
\bibitem{[FLZ]} F. Fang, X. D. Li and Z. L. Zhang, \emph{Two generalizations of Cheeger-Gromoll splitting theorem via Bakry-Emery Ricci curvature}, Ann. Inst. Fourier(Grenoble)59, 2009, no.2, 563-573.
\bibitem{[FS]} R. Schoen and D. Fischer-Colbrie, \emph{The structure of complete stable minimal surfaces in 3-manifolds with nonnegative scalar curvature}, Comm. Pure. Appl. Math., 33, 199-211, 1980.
\bibitem{[H2]} R. Hamilton, \emph{Four-manifolds with positive curvature operator}, J. Diff. Geom. 24, 153-179(1986).
\bibitem{[H1]}R. Hamilton, \emph{Three-manifolds with positive Ricci curvature}, J. Diff. Geom. 17(1982), 255-306.
\bibitem{[HK]} E. Heintze and H. Karcher, \emph{A general comparison theorem with applications to volume estimates for submanifolds}, Ann. Sci. Ecole Norm. Sup.11(1978), no. 4, 451-470.
\bibitem{[L]} G. Liu, \emph{3-manifolds with nonnegative Ricci curvature}, to appear in Invent. Math.
\bibitem{[Lc]} A. Lichnerowicz, \emph{Varietes riemanniennes a tensor C non negatif}, C. R. Acad. Sci. Paris Sr. A, 271(1970)650-653.
\bibitem{[Lo]} J. Lott, \emph{Some geometric properties of the Bakry-\'{E}mery-Ricci tensor}, Comment. Math. Helv. 78(2003), 865-883.
\bibitem{[LV]} J. Lott and C. Villani, \emph{Ricci curvature for metric-measure space via optimal transport}, Annals of Math. 169(2009), 903-991.
\bibitem{[M]} F. Morgan, \emph{Manifolds with density}, Notices of the Amer. Math. Soc., 52(2005), no. 8, 853-858.
\bibitem{[M1]} F. Morgan, \emph{Geometric measure theory, a beginner's guide}. Fourth Edition, Elsevier/Academic Press, Amsterdam, 2009.
\bibitem{[MW]} O. Munteanu and J. Wang, \emph{Smooth metric measure spaces with non-negative
curvature}, Communications in analysis and geometry, Volume 19, Number 3, 451-486, 2011.
\bibitem{[MW1]}O. Munteanu and J. Wang, \emph{Analysis of weighted Laplacian and applications to Ricci solitons}, Comm. Anal. Geom. 20(2012), no.1, 55-94.
\bibitem{[MW2]}O. Munteanu and J. Wang, \emph{Geometry of manifolds with densities}, arxiv: 1211.3996.
\bibitem{[MY1]} W. H. Meeks III and S. T. Yau, \emph{Topology of three manifolds and the embedding problems in minimal surface theory}, Ann. of Math, 112(1980), 441-485.
\bibitem{[MY2]}W. H. Meeks III and S. T. Yau, \emph{The equivariant Dehn's lemma and loop theorem}, Comment. Math. Helv. 56 (1981), no. 2, 225-239.
\bibitem{[WW]} G. Wei and W. Wylie, \emph{Comparison geometry for the Bakry-Emery Ricci tensor}, J. Differential Geom. 83 (2009), no. 2, 377-405.
\bibitem{[P1]} G. Perelman, \emph{The entropy formula for the Ricci flow and its geometric applications}, arxiv: math. DG/0211159.
\bibitem{[S]} J. Simons, \emph{Minimal varieties in Riemannian manifolds}, Ann. of Math.(2), 88(July 1968), no. 1, 62-105.
\bibitem{[S1]}K. Sturm, \emph{On the geometry of metric measured spaces}, I. Acta. Math, 196(2006), 65-131.
\bibitem{[S2]}K. Sturm, \emph{On the geometry of metric measured spaces}, II. Acta. Math, 196(2006), 133-177.
\bibitem{[Sh]}W. X. Shi, \emph{Complete noncompact three-manifolds with nonnegative Ricci curvature},  J. Diff. Geom. 29 (1989), no. 2, 353-360.
\bibitem{[SY1]}R. Schoen and S. T. Yau, \emph{Complete three-dimensional manifolds with positive Ricci curvature and scalar curvature}, Seminar on Differential Geometry, pp. 209-228, Ann. of Math. Stud., 102, Princeton Univ. Press, Princeton, N.J., 1982.
\bibitem{[SY2]} R. Schoen and S. T. Yau, \emph{On the proof of the positive mass conjecture in general relativity}, Comm. Math. Phys. 65 (1979), no. 1, 45-76.
\bibitem{[SY3]} R. Schoen and S. T. Yau, \emph{Proof of the positive mass theorem. II}, Comm. Math. Phys. 79 (1981), no. 2, 231-260.





\end{thebibliography}
\end{document}